\def\0{\{0\}}
\let\noi=\noindent
\let\sse=\subseteq
\def\span{{\rm span}}
\def\Real{{\rm Re\kern1pt}}
\def\smallfrac#1#2{{\textstyle{\frac{#1}{#2}}}}
\def\H{{\mathcal H}}
\def\Le{{\mathcal L}}
\def\K{{\mathcal K}}
\def\M{{\mathcal M}}
\def\N{{\mathcal N}}
\def\R{{\mathcal R}}
\newtheorem{theorem}{Theorem}
\newtheorem{lemma}{Lemma}
\newtheorem{corollary}{Corollary}
\theoremstyle{definition}
\newtheorem{definition}{Definition}
\newtheorem{example}{Example}
\newtheorem{remark}{Remark}
\numberwithin{theorem}{section}
\numberwithin{lemma}{section}
\numberwithin{corollary}{section}
\numberwithin{proposition}{section}
\numberwithin{conjecture}{section}
\numberwithin{definition}{section}
\numberwithin{remark}{section}
\numberwithin{question}{section}
\numberwithin{example}{section}
\numberwithin{equation}{section}
\begin{document}

\vglue-55pt
\hfill{\it Bulletin of the Korean Mathematical Society}\/,
{\bf 56} (2019) 585--596

\vglue35pt
\title{Biisometric Operators and Biorthogonal Sequences}
\author{Carlos Kubrusly}
\address{Applied Mathematics Department, Federal University,
         Rio de Janeiro, Brazil}
\email{carloskubrusly@gmail.com}
\author{Nhan Levan}
\address{Electrical Engineering Department, University of California,
         Los Angeles, USA}
\email{levan@ee.ucla.edu}
\renewcommand{\keywordsname}{Keywords}
\keywords{Biisometric operators, biorthogonal sequences, unilateral shifts,
          Hilbert spaces}
\thanks{\bf Corrections in Theorem 3.1 and Corollary 3.1 issued
            on February 11, 2021.}
\subjclass{42C05, 47B37}
\date{Submitted: March 18, 2018; accepted: April 1, 2019}

%%%%%%%%%%%%%%%%%%%%%%%%%%%%%%%%%%%%%%%%%%%%%%%%%%%%%%%%%  ABSTRACT
\begin{abstract}
It is shown that a pair of Hilbert space operators $V$ and $W$ such that
${V^*W=I}$ (called a biisometric pair) shares some common properties with
unilateral shifts when orthonormal basis are replaced with biorthogonal
sequences, and it is also shown how such a pair of biisometric operators
yields a pair of biorthogonal sequences which are shifted by them$.$ These
are applied to a class of Laguerre operators on $L^2[0,\infty)$.
\end{abstract}

\maketitle

%%%%%%%%%%%%%%%%%%%%%%%%%%%%%%%%%%%%%%%%%%%%%%%%%%%%%%%%%  SECTION 1
\section{Introduction}

Throughout this paper $\H$ and $\K$ stand for Hilbert spaces$.$ We use the
same symbol ${\<\cdot\,;\cdot\>}$ and ${\|\cdot\|}$ for the inner product
and norm in both of them, respectively$.$ Let ${T\!:\H\to\K}$ be a bounded
linear transformation (referred to as an operator if ${\K=\H}$ --- i.e., an
operator on $\H$ is a bounded linear transformation of $\H$ into itself)$.$
Let $I$ stand for the identity operator (either on $\H$ or on $\K$)$.$ Recall:
$T$ is an isometry if ${T^*T=I}$, identity on $\H.$ Every isometry is
injective$.$ A transformation $T$ is unitary if it is a surjective isometry
(i.e., an invertible isometry), which means $T$ is an isometry and a
coisometry (i.e., ${TT^*=I}$, identity on $\K$; and ${T^*T=I}$, identity on
$\H).$ By a subspace of $\H$ we mean a {\it closed}\/ linear manifold of $\H.$
Let $\M^-$ and $\M^\perp$ denote closure and orthogonal complement,
respectively, of a linear manifold $\M$ of $\H$ (both are subspaces of $\H).$
The kernel and range of a bounded linear transformation $T$ will be denoted by
$\N(T)$ (a subspace of $\H$) and $\R(T)$ (a linear manifold of $\K$),
respectively$.$ The adjoint of $T$ (which is a bounded linear transformation
of $\K$ into $\H$) will be denoted by $T^*\!.$ Let $\span\,A$ denotes the
linear span of an arbitrary set ${A\sse\H}$ and let $\bigvee\!A$ denotes the
closure of $\span\,A$.

\vskip6pt
An operator ${S\!:\H\to\H}$ is a {\it unilateral shift}\/ if there exists an
infinite sequence $\{\H_k\}_{k=0}^\infty$ of nonzero pairwise orthogonal
subspaces of $\H$ (i.e., ${\H_j\perp\H_k}$) such that
$\H={\bigoplus}_{k=0}^\infty\H_k$ (i.e., $\{\H_k\}_{k=0}^\infty$ spans $\H$)
and $S$ maps each $\H_k$ isometrically onto $\H_{k+1}$ so that
${S|_{\H_k}\!:\H_k\to\H_{k+1}}$ is a unitary transformation (i.e., a
surjective isometry)$.$ Thus $\dim\H_{k+1}=\dim\H_k$ for every ${k\ge0}.$ Such
a common dimension is the {\it multiplicity}\/ of $S.$ The adjoint
${S^*\!:\H\to\H}$ of $S$ is referred to as a {\it backward unilateral
shift}\/$.$ Every unilateral shift $S$ is an isometry (i.e., ${S^*S=I}$) and
so $S$ is injective (but not surjective)$.$ Moreover, for each ${k\ge0}$
$$
\H_k=S^k\H_0
\quad\;\hbox{with}\;\quad
\H_0=\N(S^*),
$$
where $\N(S^*)$ denotes the kernel of $S^*\!.$ Therefore
$$
S\H_k=\H_{k+1}
\quad\;\hbox{and}\;\quad
S^*\H_{k+1}=\H_k.
$$
\vskip-2pt

\vskip6pt
In this paper we show that there exist pairs of Hilbert space operators $V$
and $W$ that satisfy the above displayed shifting properties (although they
may not be unilateral shifts themselves, not even isometries), where
orthogonality is replaced by biorthogonality$.$ The motivation behind such a
program comes from the following result from \cite{LK1,LK2}$.$ {\it If\/ $S$
and\/ $R$ are unilateral shifts on a Hilbert space $\H$ such that\/
${SS^*\!+RR^*}\!= I$, then\/ $\H$ admits the dual-shift decomposition},
namely,
$$
\H={\bigoplus}_{k=1}^\infty S^k\N(S^*)
\oplus{\bigoplus}_{k=1}^\infty R^k\N(R^*).
$$
(The symbol $\oplus$ stands for orthogonal direct sum.) This will be
approached here in light of biorthogonal sequences (which are not necessarily
individually orthogonal sequences) and biisometric operators (which are not
necessarily individually isometric operators)$.$ Next section discusses these
notions.
\vskip3pt

%%%%%%%%%%%%%%%%%%%%%%%%%%%%%%%%%%%%%%%%%%%%%%%%%%%%%%%%%  SECTION 2
\section{Biorthogonal Sequences}

Biorthogonal sequences are germane to Banach spaces and were introduced in
the context of basis for separable Banach spaces \cite[Definition 1.4.1]{S},
\cite[Definition 1.f.1]{LT}$.$ Thus let $\H$ be a {\it separable}\/ Hilbert
space$.$ In a Hilbert space setting (where dual pair boils down to inner
product after the Riesz Representation Theorem) the notion of
biorthogonality reads as follows.

%%%%%%%%%%%%%%%%%%%%%%%%%%%  DEFINITION 2.1
\vskip6pt
\begin{definition}
Two sequences $\{f_n\}$ and $\{g_n\}$ of vectors in $\H$ are said to be
{\it biorthogonal}\/ (to each other) if ${\<f_m;g_n\>}=\delta_{m,n}$ where
$\delta$ stands for the Kronecker delta function$.$ If $\{f_n\}$ is such that
there exists a sequence $\{g_n\}$ for which $\{f_n\}$ and $\{g_n\}$ are
biorthogonal, then it is said that $\{f_n\}$ {\it admits a biorthogonal
sequence}\/ (and so does $\{g_n\}$) and the pair $\{\{f_n\},\{g_n\}\}$ is
referred to as a {\it biorthonormal system}\/.
\end{definition}
\vskip-2pt

\vskip6pt
Let $\{f_n\}$ and $\{g_n\}$ be a pair of biorthogonal sequences$.$ If they are
such that $f_n=g_n$ for all $n$, then we get the definition of an orthonormal
sequence, although in general neither $\{f_n\}$ nor $\{g_n\}$ are orthogonal
(much less orthonormal) sequences.

\vskip6pt
A sequence $\{f_n\}$ that admits a biorthogonal sequence $\{g_n\}$ was
called {\it minimal}\/ in \cite[Definition 1.f.1]{LT}, where it was pointed
out that (i) $\{f_n\}$ {\it admits a biorthogonal sequence if and only if
${f_k\not\in\bigvee\{f_n\}_{n\ne k}}$ for every integer}\/ $k$ (i.e.,if and
only if each vector $f_k$ from\/ $\{f_n\}$ is not in the closure of
$\span\{f_n\}/\{f_k\}$) \cite[p.42]{LT} and (ii) {\it every basic sequence
is minimal}\/ \cite[pp.43]{LT} (a sequence is {\it basic}\/ if it is a
Schauder basis for its closed span)$.$ So {\it every orthonormal sequence
admits a biorthonormal sequence}\/.

\vskip6pt
A sequence $\{f_n\}$ spanning the whole space $\H$ is sometimes called
{\it total}\/ (or {\it com\-plete}\/, or {\it fundamental}\/)$.$ This means
$\bigvee\{f_n\}=\H$ or, equivalently, ${f\perp f_n}$ for every $n$ implies
${f=0}$ (i.e., $\{f_n\}$ is total if and only if the only vector orthogonal
to every $f_n$ is the origin)$.$ It was pointed out in \cite{Y} that (iii)
{\it if\/ $\{f_n\}$ admits a biorthogonal sequence\/ $\{g_n\}$, then\/
$\{g_n\}$ is unique if and only if\/ $\{f_n\}$ is total}\/.

\vskip6pt
If $\{f_n\}$ admits a biorthogonal sequence $\{g_n\}$, and if $\{f_n\}$ is
total, then $\{g_n\}$ is not necessarily total (i.e., the property
``$\{f_n\}$ spans $\H$'' is not inherited by $\{g_n\}$ --- as in the example
below)$.$ A total sequence $\{f_n\}$ that admits a (unique) biorthogonal
sequence $\{g_n\}$ was called {\it exact}\/ in \cite{Y}, where it was shown
that (iv) {\it if the sequence\/ $\{f_n\}=\{e^{i\,\alpha_n}\}$ of vectors in
the Hilbert space}\/ $L^2({-\pi,\pi})$ (so that $f_n(t)=e^{i\,\alpha_nt}$ for
each $n$ almost everywhere in ${(-\pi,\pi)}$ --- i.e., for almost every $t$
in ${(-\pi,\pi)}$ with respect to Lebesgue measure) {\it is exact}\/ (i.e.,
$\{f_n\}$ is total and admits a (unique) biorthogonal sequence\/ $\{g_n\}$),
{\it then\/ the biorthogonal sequence $\{g_n\}$ is also exact}\/.

%%%%%%%%%%%%%%%%%%%%%%%%%%%  EXAMPLE 2.1
\vskip6pt
\begin{example}
Let $\{e_n\}_{n\ge1}$ be any orthonormal basis for $\H$ (any orthonormal
sequences that spans $\H$, thus total).
\vskip6pt\noi
(a) The sequence $\{f_n\}_{n\ge1}=\{{e_1+e_{n+1}}\}_{n\ge1}$ is total (since
if ${f\perp f_n}$ for all $n$, then the absolute value of the Fourier
coefficients of $f$ with respect to the orthonormal basis $\{e_n\}_{n\ge1}$
are constant, and so ${f=0}$)$.$ Moreover, $\{f_n\}_{n\ge1}$ admits a
biorthogonal sequence $\{g_n\}_{n\ge1}=\{e_{n+1}\}_{n\ge1}$ which is unique
and not total.
\vskip6pt\noi
(b) The sequences $\{f_n\}_{n\ge1}=\{{e_1+e_2+e_{n+2}}\}_{n\ge1}$,
$\{g_n\}_{n\ge1}=\{e_{n+2}\}_{n\ge1}$, and
$\{h_n\}_{n\ge1}=\{{e_1-e_2+e_{n+2}}\}_{n\ge1}$ are pairwise biorthogonal to
each other, and therefore they are all not total.
\vskip6pt\noi
Furthermore, every vector from $\{f_n\}_{n\ge1}$ in (a) or in (b), and from
$\{h_n\}_{n\ge1}$ in (b), is not orthogonal to any other vector from the same
sequence, and all vectors in $\{f_n\}_{n\ge1}$ and in $\{h_n\}_{n\ge1}$ have
squared norm 2 or 3 while $\{g_n\}_{n\ge1}$ in (a) or in (b) is an
orthonormal sequence.
\end{example}
\vskip-2pt

\vskip6pt
Indeed, {\it there is no distinct pair of biorthonormal sequences}\/
as we show below.

%%%%%%%%%%%%%%%%%%%%%%%%%%%  THEOREM 2.1
\vskip6pt
\begin{theorem}
Take a pair of biorthogonal sequences\/ $\{f_n\}$ and\/ $\{g_n\}.$ If\/
$\|f_n\|=\|g_n\|^{-1}$, then\/ $\{f_n\}$ and\/ $\{g_n\}$ are proportional,
which means for each\/ $n$ there exists a nonnegative number\/ $\alpha_n$
for which\/ $f_n=\alpha_ng_n.$ Moreover,\/ $\alpha_n=\|f_n\|^2$.
\end{theorem}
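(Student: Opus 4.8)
The plan is to work one index at a time and reduce everything to the equality case of the Cauchy--Schwarz inequality. First I would specialize the biorthogonality relation $\langle f_m; g_n\rangle = \delta_{m,n}$ to the diagonal $m = n$, which gives $\langle f_n; g_n\rangle = 1$ for every $n$. The hypothesis $\|f_n\| = \|g_n\|^{-1}$ forces $\|f_n\|\,\|g_n\| = 1$ and, in particular, presupposes $\|g_n\| > 0$, so both $f_n$ and $g_n$ are nonzero and the reciprocal is legitimate. Combining these two facts yields $|\langle f_n; g_n\rangle| = 1 = \|f_n\|\,\|g_n\|$, so the Cauchy--Schwarz inequality is attained as an equality for the pair $f_n, g_n$.

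Next I would invoke the equality case of Cauchy--Schwarz: the identity $|\langle x; y\rangle| = \|x\|\,\|y\|$ holds precisely when $x$ and $y$ are linearly dependent. Since $g_n \neq 0$, this produces a scalar $\alpha_n$ with $f_n = \alpha_n g_n$, which is exactly the asserted proportionality.

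Finally, to identify the constant and confirm its sign, I would substitute $f_n = \alpha_n g_n$ back into $\langle f_n; g_n\rangle = 1$. By linearity of the inner product in the appropriate slot this reads $\alpha_n \|g_n\|^2 = 1$, whence $\alpha_n = \|g_n\|^{-2}$, a manifestly real and positive number. Since $\|g_n\|^{-1} = \|f_n\|$ by hypothesis, this gives $\alpha_n = \|g_n\|^{-2} = \|f_n\|^2 \ge 0$, completing both conclusions.

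The computation is short, so I do not expect a serious obstacle; the only point requiring care is the last one. The equality case of Cauchy--Schwarz only delivers a scalar that is a priori complex, and it is the substitution into the normalized pairing $\langle f_n; g_n\rangle = 1$ that pins this scalar down as the real nonnegative quantity $\|f_n\|^2$. It is also worth noting at the outset that the norm hypothesis rules out any degenerate (zero) vectors, so every step of the argument stays inside the regime where division and the equality criterion apply.
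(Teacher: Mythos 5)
Your proof is correct and follows essentially the same route as the paper's: both reduce the statement to the equality case of the Cauchy--Schwarz inequality via $\langle f_n;g_n\rangle=1=\|f_n\|\,\|g_n\|$ (the paper cites this equivalence from \cite[Problem 5.2]{EOT}) and then pin down the constant, you by substituting $f_n=\alpha_n g_n$ into the pairing and the paper by substituting into the norm identity --- a trivial variation. Your explicit handling of the a priori complex scalar is a small point of extra care that the paper absorbs into its citation.
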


\begin{proof}
Suppose $\{f_n\}$ and $\{g_n\}$ are biorthogonal sequence$.$ Hence
$\<{f_n,g_n}\>=1.$ Take an arbitrary $n.$ If $\|f_n\|=\|g_n\|^{-1}$, then
$$
\<f_n,g_n\>=\|f_n\|\|g_n\|.
$$
But this is equivalent to saying that (see, e.g., \cite[Problem 5.2]{EOT})
$$
f_n=\alpha_n g_n
$$
for some positive real number $\alpha_n.$ Therefore $\{f_n\}$ and $\{g_n\}$
are proportional (and so biorthogonal to each other)$.$ Moreover, since
$\|f_n\|=\|g_n\|^{-1}$, it follows that
$\|f_n\|=\alpha_n\|g_n\|=\alpha_n\|f_n\|^{-1}$ and so
${\alpha_n=\|f_n\|^2}$.
\end{proof}

%%%%%%%%%%%%%%%%%%%%%%%%%%%  COROLLARY 2.1
\vskip6pt
\begin{corollary}
There is no distinct pair of biorthonormal sequences$.$ In other words, if two
sequences\/ $\{f_n\}$ and\/ $\{g_n\}$ are biorthogonal and if\/
$\|f_n\|=\|g_n\|=1$ for all\/ $n$, then\/ ${f_n=g_n}$ for all\/ $n$.
\end{corollary}

\begin{proof}
This is a particular case of Theorem 2.1 for $\|f_n\|=\|g_n\|=1$ for all $n$.
\end{proof}

%%%%%%%%%%%%%%%%%%%%%%%%%%%%%%%%%%%%%%%%%%%%%%%%%%%%%%%%%  SECTION 3
\section{Biisometric Operators}

Consider a pair of operators $V$ and $W$ on a Hilbert space $\H$.

%%%%%%%%%%%%%%%%%%%%%%%%%%%  DEFINITION 3.1
\vskip6pt
\begin{definition}
Two operators $V$ and $W$ are said to be {\it biisometric}\/ if ${V^*W=I}$,
and in this case $\{V,W\}$ is referred to as a {\it biisometric pair}\/ on
$\H$.
\end{definition}
\vskip-2pt

\vskip6pt
It is clear that ${V^*W=I}$ if and only if ${W^*V=I}.$ Thus $\{V,W\}$ is a
biisometric pair if and only if
$$
V^*W=I=W^*V.
$$
Let $V$ and $W$ be a pair of biisometric operators$.$ If they are such that
${W=V}\!$, then we get the definition of an isometry, viz., ${V^*V=I}$,
although in general neither $V$ nor $W$ are assumed to be isometries
themselves.

%%%%%%%%%%%%%%%%%%%%%%%%%%% THEOREM 3.1
\vskip6pt
\begin{theorem}
Let\/ $V\!$ and\/ $W\!$ be operators on\/ $\H.$ Suppose their adjoints\/
$V^*\!$ and\/ $W^*\!$ are not injective$.$ Take arbitrary nonzero vectors\/
$v$ and\/ $w$ in\/ $\H.$ For each nonnega\-tive integer\/ $n$ consider the
vectors
$$
\phi_n=V^nw
\quad\;\hbox{and}\;\quad
\psi_n=W^nv
$$
in\/ $\H.$ If\/ $\{V,W\}$ is a biisometric pair on\/ $\H$, then there exist
$$
v\in\N(V^*)
\quad\;\hbox{and}\;\quad
w\in\N(W^*)
$$
such that the sequences\/ $\{\phi_n\}$ and\/ $\{\psi_n\}$ are biorthogonal
sequences\/$.$ Moreover,
$$
V\phi_n=\phi_{n+1}
\quad\;\hbox{and}\;\quad
W\psi_n=\psi_{n+1},
$$
\vskip-4pt\noi
and also
$$
V^*\psi_{n+1}=\psi_n
\quad\;\hbox{and}\;\quad
W^*\phi_{n+1}=\phi_n.
$$
\end{theorem}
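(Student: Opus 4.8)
The plan is to treat the shifting identities first, since they are immediate, and then concentrate on biorthogonality. The two forward identities are pure bookkeeping: $V\phi_n = VV^nw = V^{n+1}w = \phi_{n+1}$ and $W\psi_n = WW^nv = W^{n+1}v = \psi_{n+1}$, using nothing but associativity. For the two backward identities I would peel off a single factor on the left and invoke the hypothesis: $V^*\psi_{n+1} = V^*W^{n+1}v = (V^*W)W^nv = W^nv = \psi_n$, and symmetrically $W^*\phi_{n+1} = (W^*V)V^nw = \phi_n$ using $W^*V = I$. The parenthetical equivalences $\N(V^*) = \R(V)^\perp$ and $\N(W^*) = \R(W)^\perp$ are just the standard relation $\N(T^*) = \R(T)^\perp$.

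The decisive ingredient for biorthogonality is the iterated relation $(V^*)^kW^k = I$ for every $k \geq 0$, which I would establish by induction on $k$: the step is $(V^*)^kW^k = (V^*)^{k-1}(V^*W)W^{k-1} = (V^*)^{k-1}W^{k-1}$, collapsing one matched pair at a time down to the identity. With this in hand I would expand $\langle\phi_m,\psi_n\rangle = \langle V^mw, W^nv\rangle = \langle w,(V^*)^mW^nv\rangle$ and split into three regimes. When $m>n$ the excess adjoint factors annihilate $W^n$ via the iterated relation, leaving $\langle w,(V^*)^{m-n}v\rangle$; when $m<n$ the excess factors of $W$ survive, leaving $\langle(W^*)^{n-m}w,v\rangle$; and when $m=n$ the relation yields exactly $\langle w,v\rangle$. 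Thus imposing $v\in\N(V^*)$ kills every term with $m>n$ (since then $(V^*)^{m-n}v = 0$), imposing $w\in\N(W^*)$ kills every term with $m<n$, and normalising to $\langle w,v\rangle = 1$ produces $\langle\phi_m,\psi_n\rangle = \delta_{m,n}$. Note that both kernel conditions are genuinely needed, one for each off-diagonal side.

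The part I expect to be the only delicate point is the existence of admissible $v$ and $w$: one must produce nonzero vectors in $\N(V^*)$ and $\N(W^*)$ respectively that are not mutually orthogonal, so that the normalisation $\langle w,v\rangle = 1$ can actually be met. My approach here would be to exploit that $E := VW^*$ is idempotent, $E^2 = V(W^*V)W^* = VW^* = E$. Starting from any nonzero $v\in\N(V^*) = \R(V)^\perp$, I would set $w_0 := (I-E)v = v - VW^*v$; a short check gives $W^*w_0 = W^*v - (W^*V)W^*v = 0$, so $w_0\in\N(W^*)$, while $\langle w_0,v\rangle = \|v\|^2 - \langle VW^*v,v\rangle = \|v\|^2$, because $VW^*v\in\R(V)$ is orthogonal to $v$. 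Rescaling $w := w_0/\|v\|^2$ then secures $\langle w,v\rangle = 1$.

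The single caveat I would flag is that this scheme presumes $\N(V^*)\neq\{0\}$ — equivalently, via the idempotent $WV^*$ (whose range is $\R(W)$ and whose kernel is $\N(V^*)$), that $W$ is not surjective. Confirming that the relevant kernels are nontrivial in the setting of interest is therefore where the real care is required; once that is granted, the biorthogonality and the shifting relations follow from the telescoping identity above as routine consequences.
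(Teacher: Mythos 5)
Your proof is correct, and most of it coincides with the paper's: the telescoping identity $V^{*k}W^k=I$ by induction, the split into the regimes $m>n$ (killed by $v\in\N(V^*)$), $m<n$ (killed by $w\in\N(W^*)$), and $m=n$ (normalised to $\<w;v\>=1$), and the four shifting identities are exactly the paper's computations. Where you genuinely diverge is the one delicate step you correctly single out: producing nonzero $v\in\N(V^*)$ and $w\in\N(W^*)$ with $\<w;v\>\ne0$. The paper argues abstractly that $\N(V^*)\not\perp\N(W^*)$, via the assertion $\N(V^*)^\perp=\R(V)^-\not\sse\N(W^*)$; but that inclusion points the wrong way --- non-orthogonality of the kernels requires $\N(W^*)\not\sse\R(V)^-$, i.e.\ $\N(W^*)\cap\R(V)^-=\0$ (which does follow from $W^*V=I$ and continuity, so the paper's argument is repairable but garbled as written) --- and the paper then selects $v,w$ nonconstructively. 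You instead use the idempotent $E=VW^*$ and set $w_0=(I-E)v$ for an arbitrary nonzero $v\in\N(V^*)$, checking directly that $W^*w_0=0$ and $\<w_0;v\>=\|v\|^2\ne0$. This buys three things the paper's route does not: it is constructive; it shows as a byproduct that $\N(V^*)\ne\0$ forces $\N(W^*)\ne\0$ (and conversely, by symmetry), a point the paper never addresses; and it cleanly isolates the one hypothesis both proofs actually need, namely nontriviality of the kernels. Your closing caveat is not pedantry --- it exposes a gap in the theorem as stated: for $V=W$ unitary (e.g.\ $V=W=I$) one has $V^*W=I$ yet $\N(V^*)=\0$, so the asserted nonzero $v,w$ do not exist; the statement tacitly assumes $W$ (equivalently $V$) is not surjective, which is precisely the condition you identify through the idempotent $WV^*$ with range $\R(W)$ and kernel $\N(V^*)$.
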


\begin{proof}
Suppose $V$ and $W$ have noninjective adjoints$.$ This is equivalent to saying
that $V$ and $W$ have nondense ranges (as ${\N(T^*)^\perp\!=\R(T)^-\!}$ for
every operator $T).$ Since $V^*$ and $W^*$ are noninjective take arbitrary
nonzero vectors ${v\in\N(V^*)}$ and ${w\in\N(W^*)}$, arbitrary nonnegative
integers ${m,n}$, and set
$$
\phi_n=V^nw
\quad\,\hbox{and}\,\quad
\psi_n=W^nv.
$$
Suppose ${n<m}.$ As $V^*W\!=I$ a trivial induction leads to $V^{*n}W^n\!=I$
and so
$$
\<\phi_m\,;\psi_n\>=\<V^mw\,;W^nv\>=\<w\,;V^{*(m-n)}V^{*n}W^nv\>
=\<w\,;V^{*(m-n)}v\>=0
$$
for ${n<m}$ because ${v\in\N(V^*)}$ implies ${v\in\N(V^{*{m-n}})}.$ On the
other hand, suppose ${m<n}.$ As ${W^*V=I}$ and ${w\in\N(W^*)}$ a similar
argument ensures ${\<\phi_m\,;\psi_n\>}=0$ for ${m<n}.$ Moreover, since
$W^*V=I$ we also get the following nonorthogonality.

\vskip9pt\noi
{\it Claim}\/.\hskip108pt
$\N(V^*)\not\perp\N(W^*)$.

\vskip9pt\noi
{\it Proof}\/.
Since ${W^*V=I}$, then ${V\!\ne O}.$ Thus take any ${0\ne y=\R(V)}$ so that
${y=Vx}$ for some ${0\ne x\in\H}.$ If ${y\in\N(W^*)}$, then ${0=W^*y=W^*Vx=x}$,
which is a contra\-diction$.$ So ${\R(V)\cap\N(W^*)\kern-1pt=\kern-1pt\0}$
which implies ${\R(V)^-\kern-1pt\cap\N(W^*)\kern-1pt=\kern-1pt\0}.$ Therefore
$$
\N(V^*)^\perp\cap\N(W^*)=\0.
$$
Suppose ${\N(W^*)\kern-1pt\perp\N(V^*)}.$ Then ${\N(W^*)\sse\N(V^*)^\perp}.$
Thus by the above identity
$\N(W^*)={\N(W^*)\cap\N(W^*)}\sse{\N(V^*)^\perp\cap\N(W^*)}=\0$, which
contradicts the assumption of $W^*$ being noninjective$.$ Hence
${\N(V^*)\kern-1pt\not\perp\N(W^*)}.\!\!\qed$

\vskip6pt\noi
Thus there exist (nonzero) ${v\in\N(V^*)}$ and ${w\in\N(W^*)}$ such that
${\<w\,;v\>\ne0}$, and so we may take ${v\in\N(V^*)}$ and ${w\in\N(W^*)}$
for which ${\<\,w\,;v\>}=1.$ Then
$$
\<\phi_n\,;\psi_n\>=\<\,w\,;v\>=1.
$$
Therefore ${\<\phi_m\,;\psi_n\>}=\delta_{m,n}.$ This means $\{\phi_n\}$ and
$\{\psi_n\}$ are biorthogonal sequences$.$ Moreover, according
to definition of $\phi_n$ and $\psi_n$ we get
$$
V\phi_n=V^{n+1}w=\phi_{n+1}
\quad\;\hbox{and}\;\quad
W\psi_n=W^{n+1}v=\psi_{n+1},
$$
and since ${V^*W=I=W^*V}$ we also get
\vskip6pt\noi
$$
V^*\psi_{n+1}=V^*W^{n+1}v=W^nv=\psi_n
\quad\;\hbox{and}\;\quad
W^*\phi_{n+1}=W^*V^{n+1}w=V^nw=\phi_n,
$$
\vskip4pt\noi
for every nonnegative integer $n$.
\end{proof}
\goodbreak\noi

%%%%%%%%%%%%%%%%%%%%%%%%%%%  COROLLARY 3.1
\vskip6pt
\begin{corollary}
Let\/ $\{V,W\}$ be a biisometric pair on\/ $\H$ and consider the biorthogonal
sequences\/ $\{\phi_n\}$ and\/ $\{\psi_n\}$ defined in Theorem 3.1 in terms
of nonzero vectors\/ ${v\in\N(V^*)}$ and\/ ${w\in\N(W^*)}.$ In addition, if
these biorthogonal sequences span\/ $\H$, then every\/ ${x\in\H}$ with
series expansion in\/ $\{\phi_n\}$ and\/ $\{\psi_n\}$ can be expressed as
$$
x={\sum}_{k=0}^{\infty}\<x\,;\psi_k\>\phi_k
={\sum}_{k=0}^{\infty}\<x\,;\phi_k\>\psi_k,
$$
\vskip-4pt\noi
and therefore
$$
Vx={\sum}_{k=0}^{\infty}\<x\,;\psi_k\>\phi_{k+1}
\quad\;\hbox{and}\;\quad
Wx={\sum}_{k=0}^{\infty}\<x\,;\phi_k\>\psi_{k+1},
$$
$$
V^*x={\sum}_{k=0}^{\infty}\<x\,;\phi_{k+1}\>\psi_k
\quad\;\hbox{and}\;\quad
W^*x={\sum}_{k=0}^{\infty}\<x\,;\psi_{k+1}\>\phi_k.
$$
\end{corollary}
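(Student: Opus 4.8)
The plan is to treat the two series expansions as the heart of the statement and then obtain the four operator identities as termwise consequences of them. First I would record that the coefficients are forced by biorthogonality: on the dense linear manifold $\span\{\phi_k\}$ the first identity is immediate, since if $f=\sum_{k\le N}c_k\phi_k$ is a finite combination then pairing with $\psi_j$ and invoking the biorthogonality $\langle\phi_k;\psi_j\rangle=\delta_{kj}$ from Theorem 3.1 gives $c_j=\langle f;\psi_j\rangle$; hence $f=\sum_k\langle f;\psi_k\rangle\phi_k$ trivially for such $f$. The remaining task is to propagate this identity from the dense manifold to all of $\H$, using the added hypothesis $\bigvee\{\phi_k\}=\H$.

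I expect the convergence of the biorthogonal series for an arbitrary $f$ to be the main obstacle, since totality of $\{\phi_k\}$ alone does not make it a Schauder basis. To close this I would introduce the partial-sum operators $S_Nf=\sum_{k=0}^N\langle f;\psi_k\rangle\phi_k$, establish a uniform bound $\sup_N\|S_N\|<\infty$, and then run a standard $3\varepsilon$ argument: $S_Nf=f$ eventually on the dense span, whence $S_Nf\to f$ everywhere by the uniform bound. Once convergence is secured, totality of the dual sequence pins down the limit, because $\langle S_Nf;\psi_j\rangle=\langle f;\psi_j\rangle$ for $N\ge j$ by biorthogonality, so $g:=f-\lim_N S_Nf$ satisfies $\langle g;\psi_k\rangle=0$ for all $k$ and hence $g=0$. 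The second expansion $f=\sum_k\langle f;\phi_k\rangle\psi_k$ then follows by the symmetric argument, interchanging the roles of $V$ and $W$ (and so of $\phi$ and $\psi$) and using $W^*V=I$ in place of $V^*W=I$.

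With both expansions in hand, the operator formulas are routine. Applying the bounded operator $V$ to the first expansion and moving it inside the convergent series gives $Vf=\sum_k\langle f;\psi_k\rangle V\phi_k=\sum_k\langle f;\psi_k\rangle\phi_{k+1}$ by $V\phi_k=\phi_{k+1}$; likewise $W$ applied to the second expansion yields the formula for $Wf$ via $W\psi_k=\psi_{k+1}$. For the adjoints I would apply $V^*$ to the second expansion: by Theorem 3.1, $V^*\psi_k=\psi_{k-1}$ for $k\ge1$, while $V^*\psi_0=V^*v=0$ because $v\in\N(V^*)$, so reindexing gives $V^*f=\sum_{k\ge0}\langle f;\phi_{k+1}\rangle\psi_k$; symmetrically, applying $W^*$ to the first expansion with $W^*\phi_k=\phi_{k-1}$ for $k\ge1$ and $W^*\phi_0=W^*w=0$ (since $w\in\N(W^*)$) produces the formula for $W^*f$. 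The only genuine analytic content therefore lies in justifying the series convergence of the first paragraph; everything downstream is continuity of the four operators together with the shift relations already proved.
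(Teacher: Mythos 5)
Your second and third paragraphs match the paper's own proof almost exactly: the paper likewise identifies the coefficients by pairing the expansions against the dual sequence ($\alpha_n=\langle f;\psi_n\rangle$, $\beta_n=\langle f;\phi_n\rangle$, using biorthogonality and continuity of the inner product) and then obtains the four operator formulas termwise from the shift relations of Theorem 3.1 (your explicit remarks that $V^*\psi_0=V^*v=0$ and $W^*\phi_0=W^*w=0$, needed for the reindexing, are correct and only implicit in the paper). The genuine problem is the step you yourself flag as the analytic heart: ``establish a uniform bound $\sup_N\|S_N\|<\infty$.'' You give no argument for this bound, and none can exist from the stated hypotheses, because by the Banach--Steinhaus theorem (each $S_N$ being bounded, indeed of finite rank) the uniform bound is \emph{equivalent} to the convergence $S_Nf\to f$ for every $f\in\H$, given that $S_Nf=f$ eventually on the dense span. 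So asserting the bound is circular: it is a restatement of the conclusion, not a lemma on the way to it. Nor is it merely a technicality that happens to hold: totality plus minimality does not imply the basis property. The paper's own Example 2.1(a) shows the one-sided failure --- for $\{f_n\}=\{e_1+e_{n+1}\}$ with biorthogonal $\{g_n\}=\{e_{n+1}\}$, the vector $f=e_1$ has $\langle f;g_k\rangle=0$ for all $k$, so no biorthogonal expansion can recover it --- and even under the corollary's stronger hypothesis that \emph{both} sequences are total, it is classical (see the exact exponential systems studied in \cite{Y}, cited in Section 2 of the paper) that an exact biorthogonal system need not be a Schauder basis, so the series $\sum_k\langle f;\psi_k\rangle\phi_k$ can fail to converge to $f$ for some $f$.

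To be fair, the paper does not fill this hole either: its proof simply writes ``if the biorthogonal sequences span $\H$, then $f=\sum_k\alpha_k\phi_k=\sum_k\beta_k\psi_k$ for some scalars,'' in effect reading the spanning hypothesis as the stronger assumption that convergent expansions exist (i.e., that the sequences are bases of their closed span $=\H$). Under that reading your proposal collapses to the paper's argument and is correct; your instinct that the convergence is the only genuine analytic content is exactly right. But as a self-contained proof of the statement as literally formulated, the uniform bound on the partial-sum operators is the missing idea --- it must either be added as a hypothesis (e.g., that $\{\phi_k\}$ and $\{\psi_k\}$ form bases, or satisfy a Riesz-type condition) or proved from extra structure, and your proposal does neither.
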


\begin{proof}
Take an arbitrary ${x\in\H}.$ If the biorthogonal sequences $\{\phi_n\}$ and
$\{\psi_n\}$ span $\H$ (i.e., if $\bigvee\{\phi_n\}=\bigvee\{\psi_n\}=\H$)
and if
$$
x={\sum}_{k=0}^{\infty}\alpha_k\phi_k
={\sum}_{k=0}^{\infty}\beta_k\psi_k
$$
for some pair of sequences of scalars $\{\alpha_n\}$ and $\{\beta_n\}$, then
$$
\alpha_n=\<x\,;\psi_n\>
\quad\;\hbox{and}\;\quad
\beta_n=\<x\,;\phi_n\>
$$
for every ${n\ge0}.$ Indeed, observe (by the continuity of the inner product
and recalling that $\{\phi_k\}$ and $\{\psi_k\}$ are biorthogonal) that
$$
\<x\,;\psi_n\>={\sum}_{k=0}^{\infty}\alpha_k\<\phi_k\,;\psi_n\>=\alpha_n
\quad\;\hbox{and}\;\quad
\<x\,;\phi_n\>={\sum}_{k=0}^{\infty}\alpha_k\<\psi_k\,;\phi_n\>=\beta_n
$$
for every ${n\ge0}.$ Finally, recall from Theorem 3.1 that
for each ${n\ge0}$
$$
\hbox{${V\phi_n=\phi_{n+1}}$, $\,\;{W\psi_n=\psi_{n+1}}$,
$\,\;{V^*\psi_{n+1}=\psi_n}$, $\,\;$and $\;\;{W^*\phi_{n+1}=\phi_n}$}.
$$
So apply $V$ and $W^*$ to the expansion of $x$ in terms of $\{\phi_n\}$ and
apply $V^*$ and $W$ to the expansion of $x$ in terms of $\{\psi_n\}$ (using
the continuity of the inner product).
\end{proof}

%%%%%%%%%%%%%%%%%%%%%%%%%%%%%%%%%%%%%%%%%%%%%%%%%%%%%%%%%  SECTION 4
\section{Laguerre Shifts}

We now apply the results of Section 3 to a class of Laguerre operators$.$
Recall that the Laguerre functions consist of an orthonormal basis
$\{e_n\}_{n=0}^\infty$ for the concrete Hilbert space ${L^2[0,\infty)}$
(see, e.g., \cite[Example 5.L(d)]{EOT}) defined a.e. for ${t\ge0}$ by
$$
e_n(t)=e^{-\frac{1}{2}t}\,L_n(t)
$$
for each integer ${n\ge0}$, where $L_n(t)$ are the Laguerre polynomials of
degree ${n\ge0}.$ Consider the operator ${S\!:L^2[0,\infty)\to L^2[0,\infty)}$
defined by $Sf=g$, where (for almost all ${t\ge0}$ with respect to Lebesgue
measure; i.e., almost everywhere (a.e.) on ${[0,\infty)}$)
$$
(Sf)(t)=g(t)
\quad\;\hbox{with}\;\quad
g(t)=f(t)-\int_0^t\!e^{-\frac{1}{2}(t-\tau)}\,f(\tau)\,d\tau,
$$
which is an isometry having the shift property, viz., (with
$e_n(t)=e^{-\frac{1}{2}t}\,L_n(t)\,$),
$$
(Se_n)(t)=e^{-\frac{1}{2}t}L_{n+1}(t)=e_{n+1}(t)
$$
for every ${t\geq 0}$ and each integer ${n\ge0}.$ This is referred to as
the Laguerre shift (of multiplicity $1$) generating the Laguerre functions$.$
Let ${D_{2\alpha}\!:L^2[0,\infty)\to L^2[0,\infty)}$ be the
dilation-by-$2\alpha$-operator defined by ${D_{2\alpha}f=g}$ where for every
${t\ge0}$
$$
(D_{2\alpha}f)(t)=g(t)
\quad\;\hbox{with}\;\quad
g(t)=\sqrt{2\alpha}\,f(2\alpha t)
$$
for each ${\alpha\ge\frac{1}{2}}.$ The $\alpha$-Laguerre functions are then
defined for each ${n\ge0}$ (again with $e_n(t)=e^{-\frac{1}{2}t}\,L_n(t)\,$)
by
$$
(D_{2\alpha}\,e_n)(t)
=\sqrt{2\alpha}\,e_n(2\alpha t)
=\sqrt{2\alpha}\,e^{-\alpha t}L_{n}(2\alpha t)
$$
for every ${t\ge0}.$ Similarly, the $\alpha$-Laguerre shift $S_{\alpha}$ ---
generating the $\alpha$-Laguerre functions --- is defined by
${S_{\alpha}=D_{2\alpha}S}$ so that for every ${t\ge0}$
$$
(S_{\alpha}f)(t)=(D_{2\alpha}S)f(t)=g(t)
\quad\;\hbox{with}\;\quad
g(t)=f(t)-2\alpha\int_0^t\!e^{-\alpha(t-\tau)}f(\tau)\,d\tau
$$
and (with
$(D_{2\alpha}\,e_n)(t)=\sqrt{2\alpha}\,e^{-\alpha t}L_{n}(2\alpha t)\,$)
for each ${n\ge0}$
$$
(S_{\alpha}\,e_n)(t)=(D_{2\alpha}Se_n)(t)=(D_{2\alpha}e_{n+1})(t)
=\sqrt{2\alpha}\,e^{-\alpha t}\,L_{n+1}(2\alpha t).
$$
Now consider a class of 2-parameter
Laguerre functions as follows$.$ The $(\alpha+\beta)$-Laguerre functions are
defined for every ${t\ge0}$ by (recall: $e_n(t)=e^{-\frac{1}{2}t}\,L_n(t)\,$)
$$
(D_{\alpha+\beta}\,e_n)(t)
=\sqrt{\alpha +\beta}\,e^{-\frac{\alpha+\beta}{2}t}
L_n\big((\alpha+\beta)\kern1pt t\big),
$$
for each ${n\ge0}$ where ${\alpha,\beta\ge\frac{1}{2}}.$
(Compare with \cite[Section 5]{KL}.) From now on we proceed formally.

%%%%%%%%%%%%%%%%%%%%%%%%%%%  LEMMA 4.1
\vskip6pt
\begin{lemma}
The sequences\/ $\{\phi_n\}$ and\/ $\{\psi_n\}$ given by
$$
\phi_n(t)
=\sqrt{\alpha+\beta}\,e^{-\alpha t}L_n\big((\alpha+\beta)\kern1pt t\big)
\quad\;\hbox{and}\;\quad
\psi_n(t)
=\sqrt{\alpha+\beta}\ e^{-\beta t}\,L_n\big((\alpha+\beta)\kern1pt t\big)
$$
for each\/ ${n\ge0}$ are biorthogonal and span\/ $L^2[0,\infty)$.
\end{lemma}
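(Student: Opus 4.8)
The plan is to treat the two assertions separately: biorthogonality is a routine computation, while totality is where the work lies.

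First I would verify biorthogonality directly. Since $\phi_m$ and $\psi_n$ are real-valued, $\langle\phi_m;\psi_n\rangle=(\alpha+\beta)\int_0^\infty e^{-\alpha t}L_m((\alpha+\beta)t)\,e^{-\beta t}L_n((\alpha+\beta)t)\,dt=(\alpha+\beta)\int_0^\infty e^{-(\alpha+\beta)t}L_m((\alpha+\beta)t)L_n((\alpha+\beta)t)\,dt$. The substitution $s=(\alpha+\beta)t$ contributes a Jacobian $(\alpha+\beta)^{-1}$ that cancels the leading factor, leaving $\int_0^\infty e^{-s}L_m(s)L_n(s)\,ds$. By the orthonormality of the Laguerre functions $e_n(s)=e^{-s/2}L_n(s)$ recalled in the text, i.e.\ $\langle e_m;e_n\rangle=\int_0^\infty e^{-s}L_m(s)L_n(s)\,ds=\delta_{mn}$, this equals $\delta_{mn}$. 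Hence $\{\phi_n\}$ and $\{\psi_n\}$ are biorthogonal.

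The substantive part is totality, which I would prove for $\{\phi_n\}$; the argument for $\{\psi_n\}$ is identical after interchanging $\alpha$ and $\beta$ (the Laguerre argument $(\alpha+\beta)t$ being symmetric). Suppose $f\in L^2[0,\infty)$ satisfies $\langle f;\phi_n\rangle=0$ for every $n\ge0$, the aim being $f=0$. After the same substitution, writing $g(s)=f(s/(\alpha+\beta))\in L^2[0,\infty)$ and $p=\alpha/(\alpha+\beta)\in(0,1)$, the hypothesis becomes $\int_0^\infty g(s)e^{-ps}L_n(s)\,ds=0$ for all $n$. Since $\{L_0,\dots,L_n\}$ spans the polynomials of degree at most $n$, this is equivalent to the vanishing of all moments $\int_0^\infty g(s)s^k e^{-ps}\,ds=0$, $k\ge0$. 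To convert this into $g=0$ I would argue complex-analytically: set $u=g\,e^{-p(\cdot)}$, so that $|u|\le|g|$ gives $u\in L^2$ and Cauchy--Schwarz (using $p>0$) gives $u\in L^1$; then the Laplace transform $U(z)=\int_0^\infty g(s)e^{-(p+z)s}\,ds$ is absolutely convergent with an integrable majorant for $|z|<p$, hence analytic there, and its Taylor coefficients at $0$ are precisely the vanishing moments. Thus $U\equiv0$ near $0$, so $U\equiv0$ on the connected domain $\Real z>-p$ by the identity theorem; restricting to the imaginary axis shows that the Fourier transform of $u$ (extended by $0$ to the negative reals) is identically zero, whence $u=0$ and therefore $f=0$ almost everywhere.

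The main obstacle is exactly this last step, and it is worth isolating why the elementary route fails. Writing $e^{-ps}=e^{-s/2}e^{-(p-1/2)s}$, the condition says $\langle g\,e^{-(p-1/2)(\cdot)};e_n\rangle=0$ for all $n$; when $\alpha\ge\beta$ one has $p\ge\tfrac12$, the factor $e^{-(p-1/2)s}$ is bounded, $g\,e^{-(p-1/2)(\cdot)}\in L^2$, and totality drops out instantly from the completeness of $\{e_n\}$. But because the Lemma asserts that \emph{both} $\{\phi_n\}$ and $\{\psi_n\}$ are total, the complementary case $p<\tfrac12$ cannot be avoided, and there multiplication by $e^{-(p-1/2)s}$ is an unbounded operator that may throw $g\,e^{-(p-1/2)(\cdot)}$ out of $L^2$, so the reduction to an orthonormal basis collapses. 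This is precisely what forces the Laplace--Fourier argument above, which handles every $p\in(0,1)$ uniformly.
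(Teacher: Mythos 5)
Your proof is correct, but for the substantive half (totality) it takes a genuinely different and in fact more rigorous route than the paper. For biorthogonality the two arguments are the same in content: the paper splits the weight as $e^{-\alpha t}e^{-\beta t}=e^{-\frac{\alpha+\beta}{2}t}\,e^{-\frac{\alpha+\beta}{2}t}$ and invokes orthonormality of the dilated $(\alpha+\beta)$-Laguerre functions, while you substitute $s=(\alpha+\beta)t$ and invoke orthonormality of the standard Laguerre functions; these are the same computation. For totality, the paper's proof rewrites the orthogonality hypothesis as the vanishing of the pairings of $e^{-\frac{\alpha-\beta}{2}t}h(t)$ against the $(\alpha+\beta)$-Laguerre basis and concludes $e^{-\frac{\alpha-\beta}{2}t}h(t)=0$ from totality of that basis --- precisely the ``reduction to an orthonormal basis'' that you correctly identify as collapsing when the exponential multiplier grows (for $\{\phi_n\}$, when $\alpha<\beta$, i.e.\ $p<\tfrac12$), since the weighted function need not lie in $L^2[0,\infty)$ and orthogonality to a total set then proves nothing. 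The paper is aware that this step is not fully justified: it announces ``From now on we proceed formally'' just before the lemma, and its proof is exactly the formal multiplier argument. Your replacement --- reducing to vanishing moments $\int_0^\infty g(s)s^k e^{-ps}\,ds=0$, showing the Laplace transform $U(z)=\int_0^\infty g(s)e^{-(p+z)s}\,ds$ is analytic on $\Real z>-p$ with all Taylor coefficients at $0$ equal to zero, propagating $U\equiv0$ by the identity theorem, and finishing with $L^1$-Fourier uniqueness on the imaginary axis --- is watertight for every $p\in(0,1)$, hence for all $\alpha,\beta\ge\tfrac12$ simultaneously, which is what the symmetric statement of the lemma requires. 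In short: the paper's approach buys brevity and a clean conceptual reduction to an orthonormal basis at the cost of rigor outside the case $\alpha\ge\beta$ (respectively $\beta\ge\alpha$ for $\{\psi_n\}$); your approach supplies the missing analytic argument and actually proves the lemma as stated, including the diagnosis of why the elementary route cannot cover both sequences at once.
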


\begin{proof}
For each ${m,n\ge0}$
\begin{eqnarray*}
\<\phi_m\,;\psi_{n}\>
\kern-6pt&=&\kern-6pt
\!\!\int_0^\infty\!\!\!\!
\sqrt{\alpha\!+\!\beta}\,e^{-\alpha t}
L_m\big((\alpha+\beta)\kern1pt t\big)
\sqrt{\alpha+\beta}\,e^{-\beta t}
L_n\big((\alpha\!+\!\beta)\kern1pt t\big)\,dt \\
\kern-6pt&=&\kern-6pt
\!\!\int_0^\infty\!\!\!\!
\sqrt{\alpha\!+\!\beta}\,e^{-\frac{\alpha+\beta}{2}t}
L_m\big((\alpha\!+\!\beta)\kern1pt t\big)
\sqrt{\alpha+\beta}\,e^{-\frac{\alpha+\beta}{2}t}
L_n\big((\alpha+\beta)\kern1pt t\big)\,dt
=\delta_{m,n}
\end{eqnarray*}
since the above-defined $(\alpha+\beta)$-Laguerre functions, namely,
${D_{\alpha+\beta}\big(e^{-\frac{1}{2}t}L_n(t)\big)}
={\sqrt{\alpha+\beta}\,e^{-\frac{\alpha+\beta}{2}t}}
{L_n\big((\alpha+\beta)\kern1pt t\big)}$,
are orthonormal$.$ It remains to show that $\{\phi_n\}$ and $\{\psi_n\}$
span $L^2[0,\infty).$ Suppose there is a nonzero ${h\in L^2[0,\infty)}$
such that
$$
\int_0^\infty\!\!e^{-\alpha t}L_n\big((\alpha+\beta)\kern1pt t\big)h(t)\,dt=0
$$
for every ${t\ge0}$ and each ${n\ge0}$, which can be rewritten as
$$
\int_0^\infty\!\!e^{-\frac{\alpha+\beta}{2}t}L_n\big((\alpha+\beta)\kern1pt t
\big)\big(e^{-\frac{\alpha-\beta}{2}t}h(t)\big)\,dt=0.
$$
So, as the $(\alpha+\beta)$-Laguerre functions
${\sqrt{\alpha+\beta}\,e^{-\frac{\alpha+\beta}{2}t}
L_n\big((\alpha+\beta)\kern1pt t\big)}$
span $L^2[0,\infty)$,
$$
e^{-\frac{\alpha-\beta}{2}t}h(t)=0
\;\;\hbox{for every}\;\;
t\ge0
\quad\;\Longrightarrow\;\quad
h(t)=0
\;\;\hbox{for every}\;\;
t\ge0,
$$
ensuring that $\{\phi_n\}$ spans $L^2[0,\infty).$ Interchanging $\alpha$ and
$\beta$, $\{\psi_n\}$ spans $L^2[0,\infty)$.$\!\!\!$
\end{proof}

%%%%%%%%%%%%%%%%%%%%%%%%%%%  LEMMA 4.2
\vskip6pt
\begin{lemma}
The Laplace transforms of each\/ $\phi_n$ and\/ $\psi_n$ are given by
$$
\Le_s[\phi_n](s)
=\left[\smallfrac{s-\beta}{s+\alpha}\right]^n
\smallfrac{\sqrt{\alpha+\beta}}{s+\alpha}
\quad\;\hbox{and}\;\quad
\Le_s[\psi_n](s)
=\left[\smallfrac{s-\alpha}{s+\beta}\right]^n
\smallfrac{\sqrt{\alpha+\beta}}{s+\beta}.
$$
\end{lemma}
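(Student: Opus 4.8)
The plan is to reduce the whole computation to the classical Laplace transform of the Laguerre polynomials and then account separately for the exponential factor and the internal scaling. The one ingredient I would establish (or cite) at the outset is the identity
$$
\int_0^\infty e^{-st}L_n(t)\,dt=\frac{(s-1)^n}{s^{n+1}},
$$
valid for $\Real s>0$. The cleanest derivation is from the Laguerre generating function $\sum_{n\ge0}L_n(t)x^n=(1-x)^{-1}\exp(-xt/(1-x))$: integrating term by term against $e^{-st}$ collapses the right-hand side to $1/(s-x(s-1))=s^{-1}\sum_{n\ge0}((s-1)/s)^n x^n$, and matching powers of $x$ yields the displayed transform.

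Next I would compute $\L_s[\phi_n]$ directly. Since $\phi_n(t)=\sqrt{\alpha+\beta}\,e^{-\alpha t}L_n\big((\alpha+\beta)t\big)$, the factor $e^{-\alpha t}$ merely shifts the Laplace variable, so that
$$
\L_s[\phi_n](s)=\sqrt{\alpha+\beta}\int_0^\infty e^{-(s+\alpha)t}L_n\big((\alpha+\beta)t\big)\,dt.
$$
The substitution $u=(\alpha+\beta)t$ then turns this into $(\alpha+\beta)^{-1}\int_0^\infty e^{-\sigma u}L_n(u)\,du$ with $\sigma=(s+\alpha)/(\alpha+\beta)$, and the classical transform above applies at once.

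It remains to simplify. I would record the two identities $\sigma-1=(s-\beta)/(\alpha+\beta)$ and $\sigma=(s+\alpha)/(\alpha+\beta)$, substitute them into $(\sigma-1)^n/\sigma^{n+1}$, and observe that all but one factor of $(\alpha+\beta)$ cancels; combined with the prefactor $\sqrt{\alpha+\beta}/(\alpha+\beta)$ this produces exactly $[(s-\beta)/(s+\alpha)]^n\,\sqrt{\alpha+\beta}/(s+\alpha)$. Finally, since $\psi_n$ is obtained from $\phi_n$ by interchanging $\alpha$ and $\beta$ (the sum $\alpha+\beta$ being symmetric), the identical computation with $\alpha$ and $\beta$ swapped gives the stated transform of $\psi_n$, and no separate argument is required.

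As for the main obstacle, there is genuinely no deep difficulty: once the Laplace transform of $L_n$ is in hand the lemma is pure bookkeeping. The single place to be careful is the change of variables, where the Jacobian $(\alpha+\beta)^{-1}$ and the scaling inside the argument of $L_n$ must both be tracked so that the powers of $(\alpha+\beta)$ cancel correctly — a slip there is the only realistic way to land a wrong constant. I would also note in passing that the natural domains of validity are $\Real s>-\alpha$ for $\phi_n$ and $\Real s>-\beta$ for $\psi_n$, which is where these rational expressions are genuine (convergent) Laplace transforms.
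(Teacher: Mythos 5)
Your proposal is correct and follows essentially the same route as the paper: both proofs rest on the classical transform $\L_s[L_n](s)=(s-1)^n/s^{n+1}$, handle the factor $e^{-\alpha t}$ by a shift of the Laplace variable together with the internal scaling by $\alpha+\beta$, and obtain $\psi_n$ by interchanging $\alpha$ and $\beta$. The only difference is one of care: the paper performs the scaling-and-shift step ``formally'' in a single line, whereas you make the substitution $u=(\alpha+\beta)t$ explicit, track the cancellation of the powers of $\alpha+\beta$, and record the domains $\Real s>-\alpha$ and $\Real s>-\beta$ --- all of which tightens, but does not alter, the paper's argument.
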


\begin{proof}
The functions $\phi_n$ and\/ $\psi_n$ were defined in Lemma 4.1 as follows.
$$
\phi_n(t)
=\sqrt{\alpha+\beta}\,e^{-\alpha t}L_n\big((\alpha+\beta)\kern1pt t\big)
\quad\;\hbox{and}\;\quad
\psi_n(t)
=\sqrt{\alpha+\beta}\ e^{-\beta t}\,L_n\big((\alpha+\beta)\kern1pt t\big).
$$
Recall: the Laplace transform of the Laguerre polynomial $L_n(t)$ is
$$
\Le_s[L_n](s)=\smallfrac{(s-1)^{n}}{s^{n+1}}.
$$
\vskip-4pt\noi
Thus (formally)
\vskip4pt\noi
\begin{eqnarray*}
\Le_s\big[\sqrt{\alpha+\beta}\,e^{-\alpha t}
L_n\big((\alpha+\beta)\kern1pt t\big)\big]
\kern-6pt&=&\kern-6pt
\sqrt{\alpha+\beta}
\;\smallfrac{(s+\alpha-[\alpha+\beta])^{n}}{(s+\alpha)^{n+1}}              \\
\kern-6pt&=&\kern-6pt
\sqrt{\alpha+\beta}
\;\smallfrac{(s-\beta)^n}{(s+\alpha)^{n+1}}
=\left[\smallfrac{s-\beta}{s+\alpha}\right]^n
\smallfrac{\sqrt{\alpha+\beta}}{s+\alpha},
\end{eqnarray*}
\vskip-4pt\noi
and hence
$$
\Le_s\big[\sqrt{\alpha+\beta}\,e^{-\beta t}
L_n\big((\alpha+\beta)\kern1pt t\big)\big]
=\left[\smallfrac{s-\alpha}{s+\beta}\right]^n
\smallfrac{\sqrt{\alpha+\beta}}{s+\beta}
$$
by interchanging $\alpha$ and $\beta$.
\end{proof}
\vskip-2pt

\vskip6pt
The shift operator $S$ corresponds to the operator {\it multiplication}\/ by
the function $H(\cdot)$ in the Hardy space $H^2$ which is given by
$$
H(s)=\smallfrac{s-\frac{1}{2}}{s+\frac{1}{2}}
$$
for ${\Real(s)>-\frac{1}{2}}.$ The Laplace
transforms of $\phi_n$ and $\psi_n$ in Lemma 4.2 imply the existence of
functions $H_{\alpha\beta}$ and $H_{\beta\alpha}$ in $H^2$ given by
$$
H_{\alpha\beta}(s)=\smallfrac{s-\alpha}{s+\beta}
\quad\;\hbox{and}\;\quad
H_{\beta\alpha}(s)=\smallfrac{s-\beta}{s+\alpha}.
$$
Consequently we consider the $(\alpha\beta)$-Laguerre operator
$S_{\alpha\beta}$ on $L^2[0,\infty)$ defined by ${S_{\alpha\beta}f=g}$
where, for every ${t\ge0}$
$$
(S_{\alpha\beta}f)(t)=g(t)
\quad\;\hbox{with}\;\quad
g(t)=f(t)-(\alpha+\beta)\int_0^t\!e^{-\beta(t-\tau)}f(\tau)\,d\tau,
$$
generating for each ${n\ge0}$ the function $\psi_n$ given by
$$
\psi_n(t)
=\sqrt{\alpha+\beta}\,e^{-\beta t}L_n\big((\alpha+\beta)\kern1pt t\big).
$$
Interchanging $\alpha$ and $\beta$ we have the $S_{\beta\alpha}$-Laguerre
operator $S_{\beta\alpha}$ on $L^2[0,\infty)$ defined by
${S_{\beta\alpha}f=g}$ where, for every ${t\ge0}$
$$
(S_{\beta\alpha}f)(t)=g(t)
\quad\;\hbox{with}\;\quad
g(t)=f(t)-(\alpha+\beta)\int_0^t\!e^{-\alpha(t-\tau)}f(\tau)\,d\tau,
$$
generating the function $\phi_n$ given by
$$
\phi_n(t)
=\sqrt{\alpha+\beta}\,e^{-\alpha t}L_n\big((\alpha+\beta)\kern1pt t\big).
$$
Observe that $S_{\alpha \beta}$ is associated with the $\alpha$-Laguerre
shift $S_{\alpha}$ while $S_{\beta\alpha}$ is associated with the
$\beta$-Laguerre shift $S_{\beta}$.

%%%%%%%%%%%%%%%%%%%%%%%%%%%  THEOREM 4.1
\vskip6pt
\begin{theorem}
The\/ $(\alpha\beta)$ and\/ $(\beta\alpha)$-Laguerre operators\/
$S_{\alpha\beta}$ and\/ $S_{\beta\alpha}$ consist of a biisometric pair on\/
$L^2[0,\infty)$ having the following properties.
$$
\N(S_{\alpha\beta}^*)=\span\{e^{-\alpha(\cdot)}\}=\N(S_\alpha^*),
$$
$$
\N(S_{\beta\alpha}^*)=\span\{e^{-\beta(\cdot)}\}=\N(S_\beta^*),
$$
$$
S_{\alpha\beta}^*S_{\beta\alpha}=I=S_{\beta\alpha}^*S_{\alpha\beta},
$$
$$
S_{\alpha\beta}S_{\beta\alpha}=S_\alpha S_\beta
=S_{\beta\alpha}S_{\alpha\beta},
$$
\vskip4pt\noi
and so\/ $S_{\alpha\beta}S_{\beta\alpha}$ is a shift of multiplicity\/ $2$.
\end{theorem}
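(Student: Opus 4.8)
The plan is to transport the entire problem to the Hardy space $H^2$ of the right half-plane via the Laplace transform, where (as Lemma 4.2 and the surrounding discussion already record) $S_{\alpha\beta}$ and $S_{\beta\alpha}$ become the analytic multiplication operators $M_{H_{\alpha\beta}}=T_{H_{\alpha\beta}}$ and $M_{H_{\beta\alpha}}=T_{H_{\beta\alpha}}$ with symbols $H_{\alpha\beta}(s)=\smallfrac{s-\alpha}{s+\beta}$ and $H_{\beta\alpha}(s)=\smallfrac{s-\beta}{s+\alpha}$; likewise $S_\alpha\leftrightarrow M_{H_\alpha}$, $S_\beta\leftrightarrow M_{H_\beta}$ with the inner symbols $H_\alpha(s)=\smallfrac{s-\alpha}{s+\alpha}$, $H_\beta(s)=\smallfrac{s-\beta}{s+\beta}$, and the vectors $e^{-\alpha(\cdot)}$, $e^{-\beta(\cdot)}$ correspond to $\smallfrac{1}{s+\alpha}$, $\smallfrac{1}{s+\beta}$. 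In this dictionary composition of operators becomes multiplication of symbols, and the adjoint of an analytic Toeplitz operator $T_\varphi$ is the co-analytic Toeplitz operator $T_{\overline\varphi}$. Everything then reduces to rational-function algebra plus the Brown--Halmos product rule.

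For the biisometric identity I would write $S_{\alpha\beta}^*S_{\beta\alpha}=T_{\overline{H_{\alpha\beta}}}\,T_{H_{\beta\alpha}}$ and invoke Brown--Halmos (legitimate since $\overline{\overline{H_{\alpha\beta}}}=H_{\alpha\beta}$ is analytic and $H_{\beta\alpha}$ is analytic) to collapse it to $T_{\overline{H_{\alpha\beta}}\,H_{\beta\alpha}}$. A one-line boundary computation on the imaginary axis, $\overline{H_{\alpha\beta}(i\omega)}\,H_{\beta\alpha}(i\omega)=\smallfrac{-i\omega-\alpha}{-i\omega+\beta}\cdot\smallfrac{i\omega-\beta}{i\omega+\alpha}=1$, shows the symbol is the constant $1$, so $S_{\alpha\beta}^*S_{\beta\alpha}=T_1=I$; interchanging $\alpha$ and $\beta$ gives $S_{\beta\alpha}^*S_{\alpha\beta}=I$, so $\{S_{\beta\alpha},S_{\alpha\beta}\}$ is a biisometric pair. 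The product formulas are cheaper still: $H_{\alpha\beta}H_{\beta\alpha}=\smallfrac{(s-\alpha)(s-\beta)}{(s+\alpha)(s+\beta)}=H_\alpha H_\beta$, and since scalar multiplication commutes, $S_{\alpha\beta}S_{\beta\alpha}=M_{H_{\alpha\beta}H_{\beta\alpha}}=M_{H_\alpha H_\beta}=S_\alpha S_\beta=M_{H_{\beta\alpha}H_{\alpha\beta}}=S_{\beta\alpha}S_{\alpha\beta}$.

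For the kernels the one genuinely structural fact is the factorization $H_{\alpha\beta}=H_\alpha\cdot u$ with $u(s)=\smallfrac{s+\alpha}{s+\beta}$ invertible in $H^\infty$ (both $u$ and $u^{-1}=\smallfrac{s+\beta}{s+\alpha}$ are bounded and analytic on the right half-plane), so $uH^2=H^2$ and hence $\R(S_{\alpha\beta})=H_{\alpha\beta}H^2=H_\alpha H^2=\R(S_\alpha)$, which is closed because $H_\alpha$ is inner. Therefore $\N(S_{\alpha\beta}^*)=\R(S_{\alpha\beta})^\perp=\R(S_\alpha)^\perp=\N(S_\alpha^*)$, and $\N(S_\alpha^*)=\span\{e^{-\alpha(\cdot)}\}$ because $S_\alpha=D_{2\alpha}S$ is the multiplicity-one $\alpha$-Laguerre shift with wandering subspace $\span\{D_{2\alpha}e_0\}=\span\{\sqrt{2\alpha}\,e^{-\alpha(\cdot)}\}$; the $\beta$-statement is the mirror image. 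Finally $S_\alpha S_\beta=M_{H_\alpha H_\beta}$ with $H_\alpha H_\beta$ a non-constant degree-two Blaschke product, hence inner, so $S_\alpha S_\beta$ is a pure isometry, i.e.\ a unilateral shift, whose multiplicity is $\dim\N\big((S_\alpha S_\beta)^*\big)$. I would compute this dimension without model-space theory: $f\in\N(S_\beta^*S_\alpha^*)$ iff $S_\alpha^*f\in\N(S_\beta^*)=\span\{e^{-\beta(\cdot)}\}$, and since $S_\alpha$ is an isometry its adjoint $S_\alpha^*$ is surjective with one-dimensional kernel $\span\{e^{-\alpha(\cdot)}\}$, so rank--nullity gives $\dim\N(S_\beta^*S_\alpha^*)=1+1=2$.

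The step carrying the real weight is the foundational one, not any single computation: making the ``formal'' Laplace dictionary honest. One must fix the right-half-plane $H^2$ conventions, verify that the adjoint $S_{\alpha\beta}^*$ is indeed the co-analytic Toeplitz operator $T_{\overline{H_{\alpha\beta}}}$ (a compression, not a multiplication), note that because $H_{\alpha\beta}$ is not inner the operator $S_{\alpha\beta}$ is \emph{not} an isometry, so no isometry shortcut exists and the range cancellation of the outer factor $u$ really is needed, and confirm that Brown--Halmos applies. As a rigor-free alternative staying entirely inside the paper, one can bypass Toeplitz theory: Lemma 4.1 already gives that $\{\phi_n\},\{\psi_n\}$ are biorthogonal and total, with $S_{\alpha\beta}\psi_n=\psi_{n+1}$ and $S_{\beta\alpha}\phi_n=\phi_{n+1}$, so $\<S_{\alpha\beta}^*\phi_{n+1};\psi_m\>=\<\phi_{n+1};S_{\alpha\beta}\psi_m\>=\<\phi_{n+1};\psi_{m+1}\>=\delta_{nm}$ forces $S_{\alpha\beta}^*\phi_{n+1}=\phi_n$ and $S_{\alpha\beta}^*\phi_0=0$ by totality, yielding the biisometric relation on the dense span of $\{\phi_n\}$ and $\N(S_{\alpha\beta}^*)=\span\{\phi_0\}=\span\{e^{-\alpha(\cdot)}\}$ at once.
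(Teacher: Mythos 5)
Your proposal is correct, but it travels a genuinely different road from the paper. The paper stays entirely in the time domain: it writes out the adjoint explicitly as $(S_{\alpha\beta}^*f)(t)=f(t)-(\alpha+\beta)\int_t^\infty e^{-\beta(\tau-t)}f(\tau)\,d\tau$, differentiates the integral equation $S_{\alpha\beta}^*f=0$ to obtain the ODE $f'=-\alpha f$, hence $f(t)=Ke^{-\alpha t}$, identifies $\N(S_\alpha^*)$ the same way, dismisses the two operator identities as ``simple calculations,'' and settles the multiplicity by asserting that the product of two commuting multiplicity-one shifts $S_\alpha S_\beta$ is a shift of multiplicity $2$. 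You instead transcribe everything to $H^2$ of the half-plane, where the biisometric identity falls out of Brown--Halmos plus the boundary cancellation $\overline{H_{\alpha\beta}}\,H_{\beta\alpha}=1$, the kernel identification becomes structural via the factorization $H_{\alpha\beta}=H_\alpha u$ with $u=\frac{s+\alpha}{s+\beta}$ invertible in $H^\infty$ (so $\R(S_{\alpha\beta})=\R(S_\alpha)$ is closed and the kernels of the adjoints coincide), and the multiplicity-$2$ claim is actually \emph{proved}: $H_\alpha H_\beta$ is a degree-two Blaschke product, hence inner, so $S_\alpha S_\beta$ is a pure isometry, and $\dim\N(S_\beta^*S_\alpha^*)=2$ follows from surjectivity of $S_\alpha^*$ plus the one-dimensional kernels. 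In short, your route supplies rigor exactly where the paper waves its hands (the algebraic identities, range closedness for the non-isometric $S_{\alpha\beta}$, and the multiplicity count), at the price of having to legitimize the Laplace dictionary --- a price the paper itself waives by announcing ``from now on we proceed formally,'' so you are if anything holding yourself to a higher standard than the source. Your fallback argument via the biorthogonal sequences of Lemma 4.1 is also sound and closest in spirit to Theorem 3.1; its only small gap is the inclusion $\N(S_{\alpha\beta}^*)\sse\span\{\phi_0\}$, which needs the one-line observation that $\big(\bigvee\{\psi_m\}_{m\ge1}\big)^\perp$ is at most one-dimensional (any two of its elements with equal inner product against $\psi_0$ differ by a vector orthogonal to the total family $\{\psi_m\}_{m\ge0}$, hence are equal), after which it contains $\phi_0\ne0$ and the identification is complete.
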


\begin{proof}
For ${t\ge0}$
$$
S_{\alpha\beta}^*f=g
\quad\;\hbox{with}\;\quad
g(t)=f(t)-(\alpha+\beta)\int_t^\infty\!\!e^{-\beta(\sigma-t)}f(\tau)\,d\tau.
$$
Therefore $f\in\N(S_{\alpha\beta}^*)$ if and only if
${S^*_{\alpha\beta}f=0}$, which implies, for every ${t\ge0}$,
$$
f(t)=(\alpha+\beta)\int_t^\infty\!\!e^{-\beta(\sigma-t)}f(\tau)\,d\tau.
$$
Differentiating both sides we get, for ${t\ge0}$,
$$
f(t)=\beta f(t)-(\alpha+\beta)f(t)=-\alpha f(t).
$$
Solving for $f$ we get
\vskip-4pt\noi
$$
f(t)=K e^{-\alpha t}
$$
for ${t\ge0}$ and some constant $K.$ The same argument leads to
$$
S_{\alpha}^*e^{-\alpha t}=0
$$
for every ${t\ge0}.$ This proves the first property$.$ Interchanging $\alpha$
and $\beta$ we get the second one$.$ The next two properties are derived by
simple calculations$.$ Finally, since $S_{\alpha \beta}\,S_{\beta \alpha}$ is
the convolution of two commutable shifts of multiplicities $1$, viz.,
$\;S_{\alpha}S_{\beta}$, it is therefore a shift of multiplicity 2.
\end{proof}
\vskip-2pt

\vskip6pt
The next result follows from Corollary 3.1, Lemmas 4.1, 4.2, and Theorem 4.1.

%%%%%%%%%%%%%%%%%%%%%%%%%%%  COROLLARY 4.1
\vskip6pt
\begin{corollary}
For each\/ ${n\kern-1pt\ge\kern-1pt0}$ consider the functions\/
${\phi_n,\psi_n\!\in\kern-1ptL^2[0,\infty)}$ as follows.
$$
\phi_n(t)
=\sqrt{\alpha +\beta}\;e^{-\beta t}L_n\big((\alpha+\beta)\kern1pt t\big)
=[S_{\alpha\beta}]^n\sqrt{\alpha+\beta}\,e^{-\beta t},
$$
$$
\psi_n(t)
=\sqrt{\alpha+\beta}\,e^{-\alpha t}L_n\big((\alpha+\beta)\kern1pt t\big)
=[S_{\beta\alpha}]^n\sqrt{\alpha+\beta}\,e^{-\alpha t}.
$$
The sequences\/ $\{\phi_n\}$ and\/ $\{\psi_n\}$ are biorthogonal and both
span\/ $L^2[0,\infty).$ Moreover, the biisometric operators\/
$S_{\alpha\beta}$ and\/ $S_{\beta\alpha}$ shift the biorthogonal sequences\/
$\{\phi_n\}$ and\/ $\{\psi_n\}.$ That is, for each\/ ${n\ge0}$
$$
S_{\alpha\beta}\phi_n=\phi_{n+1},
$$
and for every\/ ${f\in L^2[0,\infty)}$ with expansion in\/ $\{\phi_n\}$,
$$
f={\sum}_{k=0}^\infty\<f\,;\psi_{k}\>\phi_k
\quad\;\hbox{and so}\;\quad
S_{\alpha\beta}f={\sum}_{k=0}^\infty\<f\,;\psi_k\>\phi_{k+1}.
$$
Similarly, for each\/ ${n\ge0}$
$$
S_{\beta\alpha}\psi_n=\psi_{n+1},
$$
and for every\/ ${f\in L^2[0,\infty)}$ with expansion in\/ $\{\psi_n\}$,
$$
f={\sum}_{k=0}^\infty\<f\,;\phi_{k}\>\psi_k
\quad\;\hbox{and so}\;\quad
S_{\beta\alpha}f={\sum}_{k=0}^\infty\<f\,;\phi_k\>\psi_{k+1}.
$$
\end{corollary}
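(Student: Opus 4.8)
The plan is to recognize the final statement as a concrete instance of the abstract machinery in Section 3, with Lemma 4.2 supplying the bridge between the operator action and the closed forms. First I would set $V=S_{\alpha\beta}$ and $W=S_{\beta\alpha}$. By Theorem 4.1 we have $S_{\alpha\beta}^*S_{\beta\alpha}=I=S_{\beta\alpha}^*S_{\alpha\beta}$, so $\{V,W\}$ is a biisometric pair in the sense of Definition 3.1, and moreover $\N(V^*)=\N(S_{\alpha\beta}^*)=\span\{e^{-\alpha(\cdot)}\}$ and $\N(W^*)=\N(S_{\beta\alpha}^*)=\span\{e^{-\beta(\cdot)}\}$. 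I would then choose as generating vectors $w=\sqrt{\alpha+\beta}\,e^{-\beta(\cdot)}\in\N(W^*)$ and $v=\sqrt{\alpha+\beta}\,e^{-\alpha(\cdot)}\in\N(V^*)$, and verify the normalization required by Theorem 3.1, namely $\<w\,;v\>=(\alpha+\beta)\int_0^\infty e^{-(\alpha+\beta)t}\,dt=1$, which holds precisely because of the factor $\sqrt{\alpha+\beta}$.

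With this identification in place, Theorem 3.1 applies verbatim. It produces the biorthogonal sequences $\phi_n=V^nw=[S_{\alpha\beta}]^nw$ and $\psi_n=W^nv=[S_{\beta\alpha}]^nv$, together with the shifting relations $S_{\alpha\beta}\phi_n=\phi_{n+1}$ and $S_{\beta\alpha}\psi_n=\psi_{n+1}$ for every $n\ge0$. Thus the biorthogonality of $\{\phi_n\}$ and $\{\psi_n\}$ and the two shift identities are immediate; what remains is to reconcile these abstractly defined vectors with the explicit Laguerre functions displayed in the statement.

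That reconciliation is the heart of the argument, and I expect it to be the main obstacle. The issue is that $[S_{\alpha\beta}]^nw$ is an $n$-fold iterate of a time-domain convolution operator, whereas the statement asserts it equals $\sqrt{\alpha+\beta}\,e^{-\beta t}L_n((\alpha+\beta)t)$. I would resolve this by passing to the Laplace domain. From the convolution definition of $S_{\alpha\beta}$ one computes $\L_s[S_{\alpha\beta}f](s)=\smallfrac{s-\alpha}{s+\beta}\,\L_s[f](s)$, so $S_{\alpha\beta}$ acts as multiplication by $H_{\alpha\beta}(s)=\smallfrac{s-\alpha}{s+\beta}$. Since $\L_s[w](s)=\smallfrac{\sqrt{\alpha+\beta}}{s+\beta}$, iterating gives $\L_s\big[[S_{\alpha\beta}]^nw\big](s)=\big[\smallfrac{s-\alpha}{s+\beta}\big]^n\smallfrac{\sqrt{\alpha+\beta}}{s+\beta}$, which is exactly the Laplace transform recorded in Lemma 4.2 for $\sqrt{\alpha+\beta}\,e^{-\beta t}L_n((\alpha+\beta)t)$. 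Injectivity of the Laplace transform then forces $[S_{\alpha\beta}]^nw=\phi_n$ in the stated closed form; interchanging $\alpha$ and $\beta$ does the same for $\psi_n=[S_{\beta\alpha}]^nv$.

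Finally I would assemble the expansion statements. By Lemma 4.1 the sequences $\{\phi_n\}$ and $\{\psi_n\}$ are total in $L^2[0,\infty)$, so every hypothesis of Corollary 3.1 is met: a biisometric pair, biorthogonal sequences generated as in Theorem 3.1, and totality. Corollary 3.1 then yields $f=\sum_{k=0}^\infty\<f\,;\psi_k\>\phi_k$ for every $f\in L^2[0,\infty)$, and combining this with $S_{\alpha\beta}\phi_k=\phi_{k+1}$ and the continuity of $S_{\alpha\beta}$ gives $S_{\alpha\beta}f=\sum_{k=0}^\infty\<f\,;\psi_k\>\phi_{k+1}$; the symmetric pair of identities for $S_{\beta\alpha}$ follows by interchanging the two sequences.
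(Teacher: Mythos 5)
Your proposal is correct and follows exactly the route the paper intends: its entire proof of this corollary is the single line ``Apply Corollary 3.1, Lemmas 4.1, 4.2, and Theorem 4.1,'' and you have faithfully expanded that citation, including the two details the paper leaves implicit --- the normalization $\<w\,;v\>=(\alpha+\beta)\int_0^\infty e^{-(\alpha+\beta)t}\,dt=1$ needed to invoke Theorem 3.1, and the Laplace-domain identification of $[S_{\alpha\beta}]^n w$ with the closed-form Laguerre expression via Lemma 4.2. No gaps; your write-up is in fact more complete than the paper's.
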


\begin{proof}
Apply Corollary 3.1, Lemmas 4.1, 4.2, and Theorem 4.1.
\end{proof}

%%%%%%%%%%%%%%%%%%%%%%%%%%%  REMARK 4.1
\vskip6pt
\begin{remark}
(a) The Gram--Schmidt orthonormalization procedure can be extended to
biorthonormalization in Hilbert space$.$ Indeed, take a pair of sequences
$\{f_n\}$ and $\{g_n\}$, and construct the sequences $\{\phi_n\}$ and
$\{\psi_n\}$ so that ${\<\phi_m\,;\psi_n\>}=\delta_{m,n}$ as follows$.$
To begin with set
$$
\phi_0=\smallfrac{f_0}{\<f_0\,;g_0\>^\frac{1}{2}}
\quad\;\hbox{and}\;\quad
\psi_0=\smallfrac{g_0}{\<f_0\,;g_0\>^\frac{1}{2}},
$$
so that ${\<\phi_0\,;\psi_0\>=1}.$ Next set
$$
\phi_1=\smallfrac{p_1}{\<p_1\,;q_1\>^\frac{1}{2}}
\quad\;\hbox{and}\;\quad
\psi_1:=\frac{q_1}{\<p_1\,;q_1\>^\frac{1}{2}},
$$
\vskip-4pt\noi
where
$$
p_1=f_1-\<f_1\,;\psi_0\>\phi_0
\quad\;\hbox{and}\;\quad
q_1=g_1-\<g_1\,;\phi_0\>\psi_0.
$$
\vskip4pt\noi
It is plain that $\<\phi_1\,;\psi_1\>=1$, $\;p_1\perp\psi_0$, and
$q_1\perp\phi_0.$ Then
$$
r_1\phi_1= p_1=f_1-\<f_1\,;\psi_0\>\phi_0
\quad\;\hbox{where}\;\quad
r_{1}=\<p_1,q_1\>^\frac{1}{2}.
$$
Thus $\phi_1\perp\psi_0.$ Similarly,
$$
r_1\psi_1=q_1=g_1-\<g_1\,;\phi_0\>\psi_0,
$$
and so ${\psi_1\perp\phi_0}.$ In general, for ${n\ge1}$,
$$
r_n\phi_n=p_n=
f_n-{\sum}_{k=0}^{n-1}\<f_n\,;\psi_k\>\phi_k\perp\psi_0,\dots,\psi_{n-1}
\quad\;\hbox{where}\;\quad
r_n=\<p_n\,;q_n\>^\frac{1}{2}.
$$
Similarly,
\vskip-4pt\noi
$$
r_n\psi_n=q_n=
g_n-{\sum}_{k=0}^{n-1}\<g_n\,;\phi_k\>\psi_k\perp\phi_0,\dots,\phi_{n-1}.
$$
\vskip4pt\noi
If $f_n\perp h$, then $\phi_n\perp h.$ Therefore, if $\{f_n\}$ is total
(i.e., complete), then so is $\{\phi_n\}.$ Similarly, if $\{g_n\}$ is total
(i.e., complete), then so is $\{\psi_n\}$.

\vskip9pt\noi
(b) It is also worth noticing on the following points.
\vskip6pt\noi
\begin{description}
\item{$\kern-7pt$(i)}
If $f_n=g_n$, then the Gram--Schmidt biorthonormalization becomes the usual
Gram-Schmidt orthonormalization.
\vskip6pt\noi
\item{$\kern-9pt$(ii)}
If $f_n(t)=e^{-\alpha t}\big(({\alpha+\beta})\kern1pt t\big){\phantom|\!\!}^n$
and $g_n(t)=e^{-\beta t}\big(({\alpha+\beta})\kern1pt t\big){\phantom|\!\!}^n$,
then we get $\phi_n(t)=e^{\alpha t}L_n({\alpha+\beta})\kern1pt t$ and
$\psi_n(t)=e^{\beta t} L_n({\alpha+\beta})\kern1pt t.$ Moreover $\{f_n\}$ and
$\{g_n\}$ are total (i.e., complete), and $\phi_n$ and $\psi_n$ admit the
biisometric description as well$.$ Also $\{\phi_n\}$ and $\{\psi_n\}$ are in
this case independently total (i.e., complete).
\end{description}
\end{remark}

%%%%%%%%%%%%%%%%%%%%%%%%%%%%%%%%%%%%%%%%%%%%%%%%%%%%%%%%%  SECTION 5
\section{Conclusion and Remarks}

We have seen in Section 4 that the $\alpha$-Laguerre shift $S_{\alpha}$
satisfied for each ${n\ge0}$ the following properly.
$$
S_{\alpha}\big(\sqrt{2\alpha}\,e^{-\alpha t}\,L_n(2\alpha t)\big)
=\sqrt{2\alpha}\,e^{-\alpha t}L_{n+1}(2\alpha t).
$$
Moreover,
\vskip-4pt\noi
$$
[S_{\alpha}^*]\,e^{-\alpha t}=0.
$$
\vskip4pt\noi
The same type of results can be obtained for $S_{\alpha\beta}.$ Indeed,
in the space $H^2$,
$$
[H_{\alpha\beta}]^n\Le_s\big[e^{-\alpha t}\big]
=\left[\smallfrac{s-\alpha}{s+\beta}\right]^n\smallfrac{1}{s+\alpha},
$$
\vskip-4pt\noi
and hence
$$
[H_{\alpha\beta}]^{n+1}\smallfrac{1}{s+\alpha}
=\left[\smallfrac{s-\alpha}{s+\alpha}\right]
\left[\smallfrac{s-\alpha}{s+\beta}\right]^n
\smallfrac{1}{s+\beta}
=H_{\alpha}[H_{\alpha\beta}]^n\smallfrac{1}{s+\beta},
$$
for each ${n\ge0}$, where $H_{\alpha}=\frac{s-\alpha}{s+\alpha}.$
Therefore for each\/ ${n\ge0}$
$$
[S_{\alpha\beta}]^n\,e^{-\alpha t}
=S_{\alpha}\big[e^{-\beta t}L_n\big((\alpha+\beta\big)\kern1pt t)\big].
$$
Interchanging $\alpha$ and $\beta$ we get
$$
[S_{\beta\alpha}]^n\,e^{-\beta t}
=S_\beta\big[e^{-\alpha t}L_n\big((\alpha+\beta)\kern1pt t\big)\big].
$$
These functions, however, are neither orthogonal nor biorthogonal$.$ The
$\alpha$-Laguerre functions and $\alpha$-Laguerre shift $S_{\alpha}$ have
been widely applied in systems analysis and identification, see for instance
\cite{MP} and the references therein$.$ Applications of biorthogonal Laguerre
functions and biisometric Laguerre shifts will be reported elsewhere.

\vskip6pt
Finally, the biisometric operators $S_{\alpha\beta}$ and $S_{\beta\alpha}$
discussed above can be regarded as ``Laguerre-like'' shifts of
multiplicity 1$.$ A class of Laguerre shifts and Laguerre shift semigroups
of finite multiplicities have been developed in \cite{LN}$.$ We expect that
one can also construct ``Laguerre-like" shifts of finite multiplicities.

%%%%%%%%%%%%%%%%%%%%%%%%%%%%%%%%%%%%%%%%%%%%%%%%%%%%%%%%% ACKNOWLEDGMENT
\vskip4pt\noi
\section*{Acknowledgment}

We thank Ole Christensen for corrections in Theorem 3.1 and Corollary 3.1.

%%%%%%%%%%%%%%%%%%%%%%%%%%%%%%%%%%%%%%%%%%%%%%%%%%%%%%%%%  REFERENCES
\bibliographystyle{amsplain}

\end{document}